\declaretheorem{theorem}
\declaretheorem{lemma}
\declaretheorem{conjecture}
\declaretheoremstyle[qed=$\square$]{definitionwithend}
\crefname{assumption}{Assumption}{Assumptions}
\crefname{setting}{Setting}{Setting}
\crefname{conjecture}{Conjecture}{Conjectures}
\crefname{fact}{Fact}{Facts}
\newcommand{\abs}[1]{\ensuremath{\left\lvert #1 \right\rvert}}
\newcommand{\by}{\times}
\newcommand{\norm}[1]{\ensuremath{\left\lVert #1 \right\rVert}}
\newcommand{\ip}[1]{\ensuremath{\left\langle #1 \right\rangle}}
\newcommand{\set}[1]{\left\{#1\right\}}
\def\R{{\mathbb{R}}}
\newcommand{\framedheader}[3]{
  \framebox[\textwidth]{
    \vbox{
      \vspace{2mm}
      \hbox to \textwidth {\hspace{1em}\today \hfill #1\hspace{1em}}
      \vspace{4mm}
      \hbox to \textwidth {\hfill \Large{#2} \hfill}
      \vspace{2mm}
    }
  }
  \vspace*{4mm}
} \usepackage{letltxmacro}
\LetLtxMacro\orgvdots\vdots
\LetLtxMacro\orgddots\ddots
\DeclareRobustCommand\vdots{\mathpalette\@vdots{}}
\newcommand*{\@vdots}[2]{\sbox0{$#1\cdotp\cdotp\cdotp\m@th$}\sbox2{$#1.\m@th$}\vbox{\dimen@=\wd0 \advance\dimen@ -3\ht2 \kern.5\dimen@
\dimen@=\wd2 \advance\dimen@ -\ht2 \dimen2=\wd0 \advance\dimen2 -\dimen@
    \vbox to \dimen2{\offinterlineskip
      \copy2 \vfill\copy2 \vfill\copy2 }}}
\DeclareRobustCommand\ddots{\mathinner{\mathpalette\@ddots{}\mkern\thinmuskip
  }}
\newcommand*{\@ddots}[2]{\sbox0{$#1\cdotp\cdotp\cdotp\m@th$}\sbox2{$#1.\m@th$}\vbox{\dimen@=\wd0 \advance\dimen@ -3\ht2 \kern.5\dimen@
\dimen@=\wd2 \advance\dimen@ -\ht2 \dimen2=\wd0 \advance\dimen2 -\dimen@
    \vbox to \dimen2{\offinterlineskip
      \hbox{$#1\mathpunct{.}\m@th$}\vfill
      \hbox{$#1\mathpunct{\kern\wd2}\mathpunct{.}\m@th$}\vfill
      \hbox{$#1\mathpunct{\kern\wd2}\mathpunct{\kern\wd2}\mathpunct{.}\m@th$}}}}
\DeclareRobustCommand\bddots{\mathinner{\mathpalette\@bddots{}\mkern\thinmuskip
  }}
\newcommand*{\@bddots}[2]{\sbox0{$#1\cdotp\cdotp\cdotp\m@th$}\sbox2{$#1.\m@th$}\vbox{\dimen@=\wd0 \advance\dimen@ -3\ht2 \kern.5\dimen@
\dimen@=\wd2 \advance\dimen@ -\ht2 \dimen2=\wd0 \advance\dimen2 -\dimen@
    \vbox to \dimen2{\offinterlineskip
      \hbox{$#1\mathpunct{\kern\wd2}\mathpunct{\kern\wd2}\mathpunct{.}\m@th$}\vfill
      \hbox{$#1\mathpunct{\kern\wd2}\mathpunct{.}\m@th$}\vfill
      \hbox{$#1\mathpunct{.}\m@th$}}}}
\makeatother \usepackage{soul}
\newcommand{\Nvalue}{20160}
\newcommand{\bbracket}[1]{\ensuremath{\left\llbracket #1 \right\rrbracket}}
\title{A Strengthened Conjecture on the Minimax Optimal  Constant Stepsize for Gradient Descent}
\date{\today}
\author{
	Benjamin Grimmer\footnote{Johns Hopkins University, Department of Applied Mathematics and Statistics, \texttt{grimmer@jhu.edu}}
	\and
	Kevin Shu\footnote{Georgia Institute of Technology, School of Mathematics, \texttt{kshu8@gatech.edu}}
	\and
	Alex L.\ Wang\footnote{Purdue University, Daniels School of Business, \texttt{wang5984@purdue.edu}}
}
\begin{document}
	
	\maketitle
	
	\begin{abstract}
		\citet{drori2012PerformanceOF} conjectured that the minimax optimal constant stepsize for $N$ steps of gradient descent is given by the stepsize that balances performance on Huber and quadratic objective functions. This was numerically supported by semidefinite program (SDP) solves of the associated performance estimation problems up to $N\approx 100$. This note presents a 
strengthened version of the initial conjecture. Specifically, we conjecture the existence of a certificate for the convergence rate with a very specific low-rank structure.
This structure allows us to bypass SDPs and to numerically verify both conjectures up to
$N=\Nvalue$. 	\end{abstract}

	\section{Introduction}
Gradient descent has recently seen renewed interest. Let $f\colon\mathbb{R}^n\rightarrow \mathbb{R}$ be an $L$-smooth convex function. Given an initialization $x_0$ and stepsizes $h_0,h_1,\dots h_{N-1}$, gradient descent defines
\begin{equation*}
	x_{k+1} = x_k - \frac{h_k}{L}\nabla f(x_k) \qquad \forall k=0,\dots, N-1. 
\end{equation*}
The behavior of gradient descent depends crucially on the choice of stepsizes~\cite{drori2012PerformanceOF,Teboulle2022,altschuler2023accelerationPartII,Grimmer2023-long,Grimmer2024-bestAccel}.

This note addresses the open problem of determining the minimax optimal constant stepsize for gradient descent: Given $N$, identify the stepsize $\alpha(N)\in\R$ minimizing the worst-case final objective gap 
\begin{equation} \label{eq:minimax-design}
	\min_{\tilde \alpha\in\mathbb{R}} \max_{(f,x_0)\in\mathcal{F}_{L,D}} f(x_N) - \inf f,
\end{equation}
where $\mathcal{F}_{L,D}$ denotes the set of $(f,x_0)$ such that $f$ is an $L$-smooth convex function with a minimizer $x_\star$ satisfying $\|x_0-x_\star\|\leq D$ and $x_N$ is the output of $N$ steps of gradient descent with constant stepsize $h_k=\tilde \alpha$.

The optimal stepsize $\alpha(N)$ was conjectured by~\cite{drori2012PerformanceOF} to be the one that achieves the same final error on the quadratic function and Huber function (parameterized by $\delta>0$)
\[Q(x)=\frac{L}{2}x^2 \qquad \text{and} \qquad H_\delta(x) = \begin{cases} L\delta |x|-\frac{L\delta^2}{2} &\text{ if }|x| \geq \delta\\ \frac{L}{2}x^2 &\text{ if }|x| \le \delta.\end{cases}
\] 
Note that $x_\star = 0$ is the unique minimizer of both $Q$ and $H_\delta$. Thus, setting $x_0=D$, we have that $(Q,x_0),(H_{\delta},x_0)\in\mathcal{F}_{L,D}$. When $f=Q$ and $x_0=D$, gradient descent with $h_k = \tilde\alpha$ has
\begin{align}
    \label{eq:quadratic_performance}
f(x_N) - f(x_\star) = \frac{(1-\tilde\alpha)^{2N}}{2} LD^2.
\end{align}
When $f=H_{\delta}$ with $\delta=\frac{D}{2N\tilde\alpha+1}$ and $x_0=D$, gradient descent with $h_k = \tilde\alpha$ has
\begin{align}
\label{eq:huber_performance}
f(x_N) - f(x_\star) = \frac{1}{2(2N\tilde\alpha+1)}LD^2.
\end{align}
Setting these two objective gaps equal gives the conjectured optimal stepsize $\alpha(N)$.
\begin{conjecture} [{\cite[Conjecture 3.1]{drori2012PerformanceOF}}]\label{conj:main}
	For any $N,L,D$, let $\alpha(N)\geq1$ denote the unique solution of $\frac{1}{2(2N\alpha+1)} = \frac{1}{2}(1-\alpha)^{2N}$ and let $r(N)$ denote their common value. Then, $\alpha(N)$ is the unique minimizer of~\eqref{eq:minimax-design} and achieves the value
	\begin{equation*}
		\min_{\tilde \alpha\in\mathbb{R}} \max_{(f,x_0)\in\mathcal{F}_{L,D}} f(x_N) - \inf f = r(N) LD^2.
	\end{equation*}
\end{conjecture}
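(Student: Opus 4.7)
The plan is to split the argument into a lower bound and a matching upper bound, with the upper bound being the substantial obstacle.

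\textbf{Lower bound and uniqueness.} For any $\tilde\alpha \in \R$, the computations in \eqref{eq:quadratic_performance} and \eqref{eq:huber_performance}, applied to $(Q,D)$ and $(H_\delta, D)$ with the prescribed $\delta$, already imply
\[
\max_{(f,x_0)\in\cF_{L,D}} f(x_N) - \inf f \;\ge\; \max\!\left(\tfrac{(1-\tilde\alpha)^{2N}}{2},\;\tfrac{1}{2(2N\tilde\alpha+1)}\right) LD^2,
\]
where the second entry is $+\infty$ when $2N\tilde\alpha+1\le 0$. The first term is strictly increasing in $\tilde\alpha$ on $[1,\infty)$ and the second is strictly decreasing on $(-\tfrac{1}{2N},\infty)$; at $\tilde\alpha=1$ the second already exceeds $r(N)$ (since $\alpha(N)>1$). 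Hence the pointwise maximum of the two is uniquely minimized at their crossing $\tilde\alpha=\alpha(N)$ with value $r(N)LD^2$, establishing both the lower bound and the uniqueness claim.

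\textbf{Upper bound via the PEP framework.} To match this, I would use the Performance Estimation Problem (PEP) approach of Drori--Teboulle. Fixing $\tilde\alpha=\alpha(N)$, the worst-case final gap is the value of a QCQP over the variables $\{(x_i,g_i,f_i)\}_{i\in\{0,\dots,N,\star\}}$ subject to the initial distance constraint $\norm{x_0-x_\star}^2 \le D^2$, the gradient-descent recursion $x_{k+1}=x_k-\tfrac{\alpha(N)}{L}g_k$, the optimality condition $g_\star=0$, and Taylor-based interpolation inequalities
\[
f_j - f_i - \ip{g_i,\,x_j-x_i} - \tfrac{1}{2L}\norm{g_i-g_j}^2 \;\ge\; 0 \qquad \forall\, i,j\in\{0,\dots,N,\star\}.
\]
Relaxing to the Gram matrix of $\{x_0-x_\star,g_0,\dots,g_N\}$ yields a tractable SDP, and any feasible dual solution --- nonnegative multipliers $\lambda_{ij}$ on the interpolation inequalities and $\tau$ on the distance constraint --- produces a rigorous bound. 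The task is then to exhibit multipliers whose induced combination algebraically reduces to
\[
f_N - f_\star \;\le\; \tau\bigl(D^2 - \norm{x_0-x_\star}^2\bigr) + r(N)LD^2 - \sigma(\cdot),
\]
where $\sigma$ is a nonnegative quadratic form in the iterate differences and gradients (a sum of squares representation of the residual). The pattern of tight constraints on the Huber and quadratic worst-case trajectories strongly suggests a sparse structure for $\lambda_{ij}$: nonzero entries only between consecutive iterates, between $x_\star$ and each iterate, and between $x_0$ and $x_N$, with magnitudes that should be extractable by examining how the Huber trajectory (constant-magnitude gradients in its ``active'' phase followed by a geometric ``quadratic'' phase) dictates the supporting hyperplane structure.

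\textbf{Main obstacle.} The hard step is constructing these multipliers in closed form as a function of $N$ and verifying the SOS identity for all $N$. The stepsize $\alpha(N)$ is defined only implicitly by the transcendental equation $\tfrac{1}{2(2N\alpha+1)}=\tfrac{1}{2}(1-\alpha)^{2N}$, so any closed-form certificate must avoid solving this equation and instead work with $\alpha(N)$ symbolically via its defining relation. Equivalently, the required SOS identity reduces to showing that a certain structured matrix (whose entries are polynomials in $\alpha(N)$ and $N$) is positive semidefinite for every $N$, and this is precisely where previous analytical attempts have stalled. This is exactly the obstruction the present paper sidesteps: rather than exhibiting the certificate analytically, the strengthened conjecture hypothesizes its low-rank structure, reducing global verification to a one-parameter numerical check that scales to $N=\Nvalue$.
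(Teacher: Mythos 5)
The statement you are asked to prove is in fact a \emph{conjecture} (\cref{conj:main}); the paper does not prove it, and you correctly recognize that you cannot either. The content of the paper is (i) a lower-bound lemma (\cref{lem:lowerBound}), (ii) a \emph{strengthened} conjecture (\cref{conj:strengthened}) positing a specific low-rank certificate, (iii) a theorem (\cref{thm:main}) reducing that strengthened conjecture to finding a single positive vector $d$ satisfying a quadratic system, and (iv) numerical solutions of that system for $N$ up to $\Nvalue$. So the comparison is between your roadmap and the paper's roadmap, not between two complete proofs.

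Your lower-bound argument is correct and matches \cref{lem:lowerBound}: the quadratic example is increasing in $\tilde\alpha$ on $[1,\infty)$, the Huber example is decreasing on $(-\tfrac{1}{2N},\infty)$, and their crossing at $\tilde\alpha=\alpha(N)$ uniquely minimizes the pointwise max of the two lower envelopes, giving value $r(N)LD^2$. Your observation that the worst case blows up for $2N\tilde\alpha+1\le 0$ correctly handles the remaining range. You also correctly identify the PEP dual-certificate approach as the intended route to the upper bound and correctly identify the transcendental definition of $\alpha(N)$ as the analytical obstruction.

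Where your sketch diverges from the paper's strengthened conjecture is in the guessed sparsity pattern of the multipliers $\lambda$. You posit nonzero $\lambda_{ij}$ only on consecutive pairs, on $(\star,i)$, and on $(0,N)$. The paper's \cref{conj:strengthened} instead posits a denser but still low-complexity structure: $\lambda_{\star,j}=c_j$, super- and sub-diagonal entries $a_i$ and $b_i$, and crucially $\lambda_{i,j}=d_i c_j$ for \emph{every} pair $j\ge i+2$ (so the upper-triangular off-diagonal block is rank one, not sparse). Together with a rank-one slack $Z$, this is what makes the elimination in \cref{thm:main} tractable, reducing the unknowns to the single $O(N)$-dimensional vector $d$. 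A genuinely sparse pattern of the kind you describe does not seem to support a valid certificate, based on the SDP-solve evidence the authors report; this is precisely why they found identifying the structure nontrivial. In short: your plan is the right plan at a high level, your lower bound is the paper's lower bound, but your guessed ansatz for $\lambda$ is not the one the paper conjectures to work, and no analytic proof of the upper bound exists in either your proposal or the paper.
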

A similar conjecture on the optimal stepsize for minimizing final gradient norm $\|\nabla f(x_N)\|$  was made by~\cite{Taylor2015SmoothSC}. A variant of this was recently proved by Rotaru et al.~\cite[Theorem 2.1]{rotaru2024exactworstcaseconvergencerates}.

 	\section{Preliminaries}
By a rescaling, we will take $L=1$ and $D=1$ in the remainder of this note without loss of generality. We fix $N$ and write $\alpha$ and $r$ instead of $\alpha(N)$ and $r(N)$.

Noting that \eqref{eq:quadratic_performance} is increasing on $\tilde \alpha\in[1,\infty)$ and \eqref{eq:huber_performance} is decreasing on $\tilde\alpha\in[0,\infty)$, one can deduce that no $\tilde \alpha$ can outperform the conjectured rate $r$ of $\alpha$. Formally:
\begin{lemma}\label{lem:lowerBound}
    For any $\tilde\alpha\in\R$,
	gradient descent with constant stepsize $h_k=\tilde\alpha$ has
	\[
	\max_{(f,x_0)\in\mathcal{F}_{1,1}} f(x_N) - \inf f \geq r.
	\]
\end{lemma}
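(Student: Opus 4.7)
My plan is to lower bound the worst-case gap by the performance of gradient descent on the two specific instances used to motivate the conjecture---the quadratic $(Q, 1)$ and the Huber $(H_\delta, 1)$ with $\delta = \frac{1}{2N\tilde\alpha+1}$---and then to show, by a case analysis on the location of $\tilde\alpha$ relative to $\alpha$, that at least one of these two instances always attains a final gap of at least $r$.

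First I would observe that $(Q,1)\in\mathcal{F}_{1,1}$ for every $\tilde\alpha\in\R$, and that $(H_\delta,1)\in\mathcal{F}_{1,1}$ whenever $\tilde\alpha\geq 0$ (so that the chosen $\delta$ is positive). The identities \eqref{eq:quadratic_performance} and \eqref{eq:huber_performance} then immediately yield the two lower bounds
\[
\max_{(f,x_0)\in\mathcal{F}_{1,1}} f(x_N) - \inf f\ \geq\ \frac{(1-\tilde\alpha)^{2N}}{2}\qquad\text{and}\qquad\max_{(f,x_0)\in\mathcal{F}_{1,1}} f(x_N) - \inf f\ \geq\ \frac{1}{2(2N\tilde\alpha+1)}\ \ (\tilde\alpha\geq 0).
\]

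Next I would split into three cases. If $\tilde\alpha\geq\alpha$, then since $\alpha\geq 1$, the map $\beta\mapsto\frac{(1-\beta)^{2N}}{2}$ is increasing on $[1,\infty)$, and the quadratic bound gives $\frac{(1-\tilde\alpha)^{2N}}{2}\geq\frac{(1-\alpha)^{2N}}{2}=r$. If $0\leq\tilde\alpha\leq\alpha$, the map $\beta\mapsto\frac{1}{2(2N\beta+1)}$ is decreasing on $[0,\infty)$, and the Huber bound gives $\frac{1}{2(2N\tilde\alpha+1)}\geq\frac{1}{2(2N\alpha+1)}=r$. Finally, if $\tilde\alpha<0$, the quadratic bound gives $\frac{(1-\tilde\alpha)^{2N}}{2}>\frac{1}{2}$, and this dominates $r$ since $r=\frac{1}{2(2N\alpha+1)}\leq\frac{1}{2}$ (using $\alpha\geq 1$, $N\geq 1$).

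I do not anticipate a serious obstacle; the argument is a monotonicity-and-cases calculation pinned by the defining equation of $\alpha$. The only routine verification is the closed form \eqref{eq:huber_performance}: for $\tilde\alpha\geq 0$ and $\delta = \frac{1}{2N\tilde\alpha+1}$, one checks directly that the iterates $x_k = 1 - k\tilde\alpha\delta$ all satisfy $x_k\geq\delta$ for $k=0,\dots,N-1$ (this reduces to $(N+1)\tilde\alpha\geq 0$), so each gradient used by the algorithm equals $\delta$ and a short computation gives the stated objective gap.
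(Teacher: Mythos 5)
Your proof is correct and is essentially the argument the paper intends: the lemma is justified there by the one-line observation that the quadratic rate \eqref{eq:quadratic_performance} is increasing on $[1,\infty)$ and the Huber rate \eqref{eq:huber_performance} is decreasing on $[0,\infty)$, with both pinned to the common value $r$ at $\tilde\alpha=\alpha$. You simply make explicit the details the paper leaves implicit (the $\tilde\alpha<0$ case via $r\leq\tfrac12$ and the verification that the Huber iterates stay in the linear region), which is a faithful completion of the same approach.
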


Thus, \cref{conj:main} requires us to prove that gradient descent with constant stepsize $\alpha$ has
\begin{align}
    \label{eq:rate}
    \max_{(f,x_0)\in\mathcal{F}_{1,1}} f(x_N) - \inf f \leq r.
\end{align}
\paragraph{Performance estimation}
The performance estimation program (PEP) framework, pioneered in~\cite{taylor2017interpolation,drori2012PerformanceOF}, gives us a tool for proving statements of this form.
Fix an arbitrary $1$-smooth convex function $f$, initial iterate $x_0$, and minimizer $x_\star$ of $f$.
Let $x_0, \dots,x_N$ denote the iterates of gradient descent on $f$ with constant stepsize $\alpha$.
Let $f_i=f(x_i)$ and $g_i=\nabla f(x_i)$ for any $i\in \{\star,0,1,\dots N\}$.
Then, the following quantity must be nonnegative for all $i,j\in \{\star,0,1,\dots N\}$
\[
Q_{ij} = f_i - f_j - \langle g_j, x_i - x_j\rangle - \frac{1}{2}\|g_i - g_j\|^2.
\]

We will attempt to certify $f_N-f_\star \leq r$ by finding $\lambda_{ij} \ge 0$ so that 
\[
\sum_{ij} \lambda_{ij} Q_{ij} = f_\star-f_N + r \|x_0 - x_\star\|^2- Z,
\]
where $Z$ is a positive semidefinite quadratic form in the input data.
We refer to the matrix $\lambda\in\R^{(N+2)\by (N+2)}$, whose columns and rows are each indexed by $\set{\star,0,1,2,\dots,N}$, as a \emph{certificate}.
Note that such a certificate is a formal proof of \eqref{eq:rate}.

Constructing these certificates is highly nontrivial and involves guessing analytic expressions for numerical solutions to large semidefinite programs (SDPs).
This approach has been carried out successfully for various algorithms by several prior works including~\cite{altschuler2023accelerationPartI,Kim2016optimal,Lieder2020OnTC,Taylor2019StochasticFM,Dragomir2019OptimalCA,Gu2020tightPPM}.

\section{A Strengthened Conjecture}
Proving \cref{conj:main} directly via PEP requires finding the two matrices $\lambda$ and $Z$. This requires identifying (numerically or analytically) the $O(N^2)$ entries of these matrices.
A core challenge is that these certificates are not unique so that patterns in these certificates may not immediately reveal themselves in SDP solves.
Our strengthened conjecture below identifies a highly-structured low-rank $\lambda$ and $Z$, specified by only $O(N)$ entries.

\begin{conjecture}\label{conj:strengthened}
There exists a collection of multipliers $\lambda_{i,j}\geq 0$ of the form
\begin{align*}
    \lambda = \begin{pmatrix}
    0 & c_0 & c_1 & c_2 & c_3 & \dots & c_N\\
    0 & 0 & a_0 & d_0 c_2 & d_0 c_3 & \dots & d_0 c_N\\
    & b_0 & 0 & a_1 & d_1 c_3 & \dots & d_1c_N\\
    & & b_1 & \ddots & \ddots & \ddots & \vdots\\
    & &  & \ddots & \ddots & \ddots & d_{N-2}c_N\\
    & & & & b_{N-2}& 0& a_{N-1}\\
    & & & & & 0 & 0
    \end{pmatrix},
\end{align*}
such that
\begin{align}
    \label{eq:target_identity}
    \sum_{ij} \lambda_{ij} Q_{ij} = f_\star - f_N + r \left(\|x_0 - x_\star\|^2 - \norm{\left(x_0-\frac{1}{2r}\sum_{i=0}^Nc_ig_i\right)-x_\star}^2\right).
\end{align}
Here, $a, b,c,d$ are positive vectors of the appropriate dimensions.
\end{conjecture}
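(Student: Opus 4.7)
My plan is to treat \eqref{eq:target_identity} as a polynomial identity in the data of the iteration and match coefficients on both sides. First I will use $g_\star = 0$ and the gradient descent recursion $x_{k+1} - x_k = -\alpha g_k$ to rewrite every $x_i - x_j$ and $x_i - x_\star$ appearing in the $Q_{ij}$ as a linear combination of $x_0 - x_\star$ and the gradients $g_0,\dots,g_{N-1}$. Both sides of \eqref{eq:target_identity} then live in the free vector space spanned by $\{f_i\}$, $\{\langle g_i, x_0 - x_\star\rangle\}$, $\{\langle g_i, g_j\rangle\}_{i\le j}$, and $\|x_0-x_\star\|^2$, so the identity reduces to a finite system of polynomial equations in $a, b, c, d$.

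\paragraph{Linear matching.} Expanding the right-hand side yields $f_\star - f_N + \sum_i c_i \langle g_i, x_0 - x_\star\rangle - \tfrac{1}{4r}\norm{\sum_i c_i g_i}^2$. Several coefficient matches are then automatic or immediately constrain the unknowns. The $\|x_0 - x_\star\|^2$ coefficient vanishes on both sides. The $\langle g_i, x_0 - x_\star\rangle$ coefficient is $c_i$ on both sides: the unique source on the left is $\lambda_{\star,i} Q_{\star,i}$, which contributes $\lambda_{\star,i} = c_i$ by construction of $\lambda$. The $f_l$ coefficients enforce row-minus-column-sum conditions on $\lambda$: equal to $+1$ at $l = \star$ (forcing $\sum_i c_i = 1$), $-1$ at $l = N$ (forcing $a_{N-1} + c_N(1 + \sum_{i=0}^{N-2} d_i) = 1$), and $0$ for $l \in \{0,\ldots,N-1\}$ (giving a triangular recursion that determines each $b_l$ from $a, c, d$ and $b_{l-1}$).

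\paragraph{Quadratic matching.} The heart of the argument is matching the $\langle g_i, g_j\rangle$ coefficients, where the right-hand side demands the rank-one Gram matrix $-\tfrac{1}{4r}\, cc^\top$. On the left, each $\lambda_{ij} Q_{ij}$ contributes to $\langle g_k, g_l\rangle$ through $-\tfrac{1}{2}\|g_i-g_j\|^2$ and, via the gradient descent recursion, through $-\langle g_j, x_i - x_j\rangle$. The sparsity pattern of $\lambda$ in the conjecture is designed precisely so that after collecting these contributions, the coefficient of $\langle g_i, g_j\rangle$ factors as a product of a function of $i$ and a function of $j$. I will exploit this rank-one identity to derive recursions for $c_i$ and $d_i$ (with $\alpha$ and $r$ entering as coefficients), then solve for $a_i$ from the resulting equations, and finally recover $b$ from the linear matching step.

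\paragraph{Main obstacle.} The hardest step will be verifying that the resulting $a, b, c, d$ are all positive. Since $\alpha(N)$ and $r(N)$ are only implicitly defined through the transcendental equation $\frac{1}{2(2N\alpha+1)} = \frac{1}{2}(1-\alpha)^{2N}$, clean closed-form expressions for $a,b,c,d$ may well be out of reach, and positivity will likely require an inductive argument coupled with nontrivial monotonicity inequalities in $N$. I expect positivity of $c$ to follow readily from its recursion, but positivity of $a$ and $b$---which control the near-diagonal ``feedback'' entries in $\lambda$---is the central obstruction; the numerical verification through $N = \Nvalue$ strongly supports that such positivity holds, but converting this into an analytic proof is the substantive work that remains.
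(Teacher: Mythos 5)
The statement you are trying to prove is a \emph{conjecture}: the paper itself does not prove it. What the paper does prove (Theorem~\ref{thm:main}) is exactly the reduction you sketch: expand each $Q_{ij}$ using $g_\star=0$ and $x_{i}=x_0-\alpha\sum_{\ell<i}g_\ell$, match the $f$-coefficients (your row-minus-column-sum conditions, which become system~D and the last equation of system~A), and match the Gram coefficients of $\ip{g_i,g_j}$, where the far-off-diagonal coefficients indeed factor as $c_j\bigl(-\alpha+\tfrac{1}{2r}c_i+d_i-\alpha\sum_{\ell\le i}d_\ell\bigr)$ and the near-diagonal ones give a backward linear recursion determining $a_i,b_i$. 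So your ``linear matching'' and ``quadratic matching'' paragraphs reproduce, in outline, the paper's elimination of $a,b,c$ in favor of $d$; that part of your plan is sound and is essentially the same computation.

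The genuine gap is the step you defer in your ``main obstacle'' paragraph, and it is not a technical loose end but the entire content of the conjecture: you must exhibit a positive vector $d$ for which \emph{all} of the resulting error expressions vanish exactly (an overdetermined system of $N+1$ quadratic equations in $N-1$ unknowns, after elimination) \emph{and} for which the induced $a,b,c$ are positive. Your proposal gives no mechanism for either the solvability of this overdetermined system or the positivity, beyond appealing to the numerics; the paper likewise only produces numerical $\delta$-certificates via Newton's method up to $N=\Nvalue$ (Theorem~\ref{thm:computed}), and these certify only the relaxed bound $r+10^{-11}$, not the exact identity~\eqref{eq:target_identity}. Note also that with $\alpha(N),r(N)$ defined only implicitly by $\tfrac{1}{2(2N\alpha+1)}=\tfrac12(1-\alpha)^{2N}$, even verifying consistency of the overdetermined system requires using this relation in an essential way, which your coefficient-matching plan never touches. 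As written, your proposal is a correct restatement of the paper's reduction (Theorem~\ref{thm:main}) together with an acknowledgment that the conjecture itself remains unproved; it is not a proof of the statement.
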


Note that the slack term $Z$ is a rank-one matrix according to \cref{conj:strengthened}.

The following theorem eliminates the quantities $a,b,c$ in \cref{conj:strengthened} to get an equivalent conjecture in only the $d$ vector.
Its proof expands the quantities $Q_{i,j}$ and equates the terms on the left and right of \eqref{eq:target_identity}.
\begin{theorem}\label{thm:main}
Let $a,b,c,d$ denote positive vectors of the appropriate dimensions.
Additionally, let $\epsilon_0,\dots,\epsilon_N\in\R$.
Then,
\begin{align}
    \label{eq:target_with_errors}
	\sum_{ij} \lambda_{ij}Q_{ij} &= f_\star - f_N + r\left(\norm{x_0 - x_\star}^2 - \norm{\left(x_0 - \frac{1}{2r}\sum_{i=0}^Nc_i g_i\right) - x_\star}^2 \right)\\
	&\qquad + \sum_{i=0}^{N-1} \epsilon_i \left(f_i - f_\star\right) + \frac{\epsilon_N}{2}\norm{g_0}^2\nonumber
\end{align}
if and only if
\begin{gather} \tag{A}
    \begin{cases}
        a_i = \frac{1}{\alpha}\Big(\frac{1}{2r}c_{i+1}^2 + \frac{1}{2r}c_ic_{i+1} - a_{i+1}  - (1+\alpha)c_{i+1}\left(1 + \sum_{j=0}^{i-1}d_j\right) \\
        \qquad\qquad - d_{i+1} \left(\sum_{j=i+3}^{n}c_j\right) + b_{i+1} (2\alpha - 1)\Big) \qquad\forall i\in[0,N-3],\\
        a_{N-2}= \frac{1}{\alpha}\left(\frac{1}{2r}c_{N-1}^2  + \frac{1}{2r}c_{N-2}c_{N-1} - a_{N-1} - (1+\alpha)c_{N-1}\left(1+\sum_{j=0}^{N-3}d_j\right)\right),\\
        a_{N-1} = 1 - c_N\left(1 + \sum_{j=0}^{N-2}d_j\right)
    \end{cases}
\end{gather}
\begin{gather} \tag{B}
    \begin{cases}
        b_i= \frac{1}{\alpha}\Big(
            \frac{\alpha-1}{2r}c_{i+1}^2  - \frac{1}{2r}c_ic_{i+1} - (\alpha - 1)a_{i+1} +c_{i+1}\left(1+\sum_{j=0}^{i-1}d_j\right)\\
            \qquad\qquad - (\alpha - 1)d_{i+1} \left(\sum_{j=i+3}^{n}c_j\right) + (\alpha - 1)b_{i+1} (2\alpha - 1)\Big)\qquad\forall i\in[0,N-3]\\
            b_{N-2}= \frac{1}{\alpha}\left(
            \frac{\alpha - 1}{2r}c_{N-1}^2  - \frac{1}{2r}c_{N-2}c_{N-1} - (\alpha-1)a_{N-1} + c_{N-1}\left(1+\sum_{j=0}^{N-3}d_j\right)\right)
    \end{cases}
\end{gather} 
\begin{gather} \tag{C}
    \begin{cases}
        c_i = 2r\left(\alpha \sum_{\ell=0}^{i}d_\ell  - d_i + \alpha\right)\qquad\forall i \in[0,N-2]\\
    c_{N-1}= 2r\left(1 + \sum_{j=0}^{N-2}d_j + \frac{\alpha -1}{\sqrt{2r}}\right)\\
    c_N = \sqrt{2r}
    \end{cases}
\end{gather}
and
\begin{gather} \tag{D}
    \begin{cases}
        \epsilon_0 = a_0 + d_0 \sum_{j=2}^N c_j- b_0 - c_0 \\
        \epsilon_i = b_{i-1} + a_{i}+d_{i}\sum_{j=i+2}^N c_j - a_{i-1} - b_{i} - c_{i}\left(1 + \sum_{j=0}^{i-2}d_j\right) \qquad \forall i\in[1,\dots, N-2]\\
        \epsilon_{N-1}=b_{N-2} + a_{N-1} - a_{N-2} - c_{N-1}\left(1+\sum_{j=0}^{N-3}d_j\right)\\
        \epsilon_N=- c_0 - a_0 - d_0 \sum_{j=2}^N c_j + (2\alpha-1)b_0 +\frac{1}{2r}c_0^2.
    \end{cases}
\end{gather}
\end{theorem}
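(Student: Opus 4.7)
My plan is to prove both directions simultaneously by expanding each side of \eqref{eq:target_with_errors} as a polynomial in a fixed basis of ``free'' quantities and matching coefficients. Using the gradient descent recursion $x_{k+1} = x_k - \alpha g_k$ (recall $L=1$), every $x_i - x_j$ becomes a signed sum of consecutive $g_k$'s and every $x_i - x_\star$ becomes $(x_0 - x_\star) - \alpha\sum_{\ell=0}^{i-1}g_\ell$. Substituting these into each nonzero $\lambda_{ij}Q_{ij}$ term expresses the left-hand side of \eqref{eq:target_with_errors} as a polynomial in the independent quantities $\{f_i\}$, $\norm{x_0-x_\star}^2$, $\{\ip{g_i, x_0-x_\star}\}$, $\{\ip{g_i,g_j}\}_{i<j}$, and $\{\norm{g_i}^2\}$. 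The right-hand side expands similarly once one observes that $r\norm{x_0-x_\star}^2 - r\norm{(x_0-x_\star) - \tfrac{1}{2r}\sum_i c_i g_i}^2$ simplifies to $\ip{x_0-x_\star, \sum_i c_ig_i} - \tfrac{1}{4r}\norm{\sum_i c_ig_i}^2$, so the $\norm{x_0-x_\star}^2$ coefficient automatically matches (both sides are zero). The identity then holds if and only if every remaining coefficient matches.

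Each of (A)--(D) arises from a specific family of coefficient matches. The $f_i$ coefficient for $i\in[0,N-1]$ equals the row-$i$ sum minus the column-$i$ sum of $\lambda$; tabulating the nonzero entries of $\lambda$ (namely $c_j$ in row $\star$, $a_j$ on the super-diagonal, $b_j$ on the sub-diagonal, and $d_pc_q$ in the upper off-diagonal block) immediately yields the first three lines of (D), and the $\norm{g_0}^2$ coefficient yields the $\epsilon_N$ line. Matching the $f_N$ coefficient to $-1$ produces the last line of (A), while the $f_\star$ match is automatic (it follows from the elementary identity ``total row-sum equals total column-sum'' together with the $f_N$ match), as is the $\ip{g_i, x_0-x_\star}$ match (both sides have coefficient $c_i$). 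The coefficient of $\ip{g_i,g_j}$ for $j\geq i+2$ and $j\leq N$ factors as $c_j$ times an expression in $d_0,\dots,d_i$ and $\alpha$; equating to $-c_ic_j/(2r)$ gives the first line of (C), while the $\ip{g_{N-1},g_N}$ and $\norm{g_N}^2$ matches give the remaining two lines of (C) after substituting the already-derived $a_{N-1}$. Finally, for each $i\in[0,N-2]$, the pair of matches $\ip{g_i, g_{i+1}}$ and $\norm{g_{i+1}}^2$ forms a $2\times 2$ linear system in $a_i$ and $b_i$; solving (using $c_i$ from (C)) yields the corresponding lines of (A) and (B), and the boundary case $i=N-2$ produces the separately-listed second lines.

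The main obstacle is the combinatorial bookkeeping. For each target coefficient one must enumerate all ordered pairs $(p,q)$ with $\lambda_{pq}\neq 0$ whose $Q_{pq}$-expansion contributes; the cross term $-\ip{g_q, x_p-x_q}$ couples distant indices because $x_p-x_q$ is itself a sum of many $g_k$'s, so the $d_pc_q$-block contributes to many $\ip{g_i,g_j}$ coefficients simultaneously. Boundary cases at $i\in\{N-2, N-1, N\}$ also require separate attention since $d_{N-1}$ and $b_{N-1}$ are absent and the off-diagonal sums involving $c_N$ collapse differently. Once every contribution is tabulated symbolically, the equivalence of \eqref{eq:target_with_errors} with the stated system (A)--(D) reduces to routine algebraic manipulation, proving the biconditional in a single pass.
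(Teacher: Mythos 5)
Your proposal matches the paper's proof in both strategy and structure: the paper likewise splits the difference of the two sides into its $f$-part and its gradient part, eliminates $x_i - x_\star$ via the recursion, and matches coefficients, obtaining (D) and the last line of (A) from the $f_i$ terms, the first line of (C) from the $\ip{g_i,g_j}$ coefficients with $\abs{i-j}\geq 2$, the remaining lines of (C) from the $\norm{g_N}^2$ and $\ip{g_{N-1},g_N}$ coefficients, (A) and (B) from the invertible $2\times 2$ systems in $(a_i,b_i)$ coming from $\norm{g_{i+1}}^2$ and $\ip{g_i,g_{i+1}}$, and the $\epsilon_N$ equation from the $\norm{g_0}^2$ coefficient. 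Your observations that the $f_\star$ and $\ip{g_i,x_0-x_\star}$ matches are automatic are correct and consistent with (indeed implicit in) the paper's argument, so the proposal is essentially the paper's proof, modulo carrying out the routine tabulation.
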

\begin{proof}
Let $\Delta$ denote the difference between the LHS and RHS of \eqref{eq:target_with_errors}. Let $\bbracket{\Delta}_f$ denote the terms in $\Delta$ involving $f_i$ for $i\in\set{\star,0,\dots,N}$ and let $\bbracket{\Delta}_g$ denote the remainder of the terms.

$\bbracket{\Delta}_f$ evaluates to zero if and only if (i) for $i \in[0,N-1]$ the difference between the $i$th row sum and the $i$th column sum is $\epsilon_i$ and (ii) the $N$th column of $\lambda$ has unit sum.
Note that (i) holds if and only if the first three equations of system D holds.
Note that (ii) holds if and only if the last equation of system A holds.

Next, we turn to $\bbracket{\Delta}_g$. 
Expanding the definition of $\lambda_{ij}$ and $Q_{ij}$ gives
\begin{align*}
    \bbracket{\Delta}_g
    =&\sum_{i=0}^N c_i \left(\ip{g_i, x_0 - x_\star - \alpha\sum_{\ell = 0}^{i-1} g_\ell} - \frac{1}{2}\norm{g_i}^2\right)\\
    &\qquad+\sum_{i=0}^{N-1} a_{i} \left(-\alpha\ip{g_{i+1}, g_i} - \frac{1}{2}\norm{g_i - g_{i+1}}^2\right)\\
    &\qquad+\sum_{i=0}^{N-2}\sum_{j=i+2}^N d_i c_j\left(-\sum_{\ell=i}^{j-1}\alpha\ip{g_j,g_\ell} - \frac{1}{2}\norm{g_i-g_j}^2\right)\\
    &\qquad+\sum_{i=0}^{N-2} b_{i} \left(\alpha\norm{g_{i}}^2 - \frac{1}{2}\norm{g_{i+1} - g_{i}}^2\right) - \ip{x_0 - x_\star, \sum_{i=0}^N c_i g_i} + \frac{1}{4r}\norm{\sum_{i=0}^N c_ig_i}^2 - \frac{\epsilon_N}{2}\norm{g_0}^2.
\end{align*}
The term involving $x_0 - x_\star$ is zero. Thus, $\bbracket{\Delta}_g$ is a quadratic form in $g_0,\dots,g_N$ only.

Let $i<j$ and $\abs{i - j}\geq 2$, then the coefficient on $\ip{g_i,g_j}$ is
\begin{align*}
   c_j\left(-\alpha  + \frac{1}{2r}c_i + d_i - \alpha \sum_{\ell=0}^{i}d_\ell \right).
\end{align*}
These coefficients are zero if and only if the first equation in system C holds.

The coefficient on $\frac{1}{2}\norm{g_N}^2$ is
\begin{align*}
    -c_N - a_{N-1} - c_N \sum_{i=0}^{N-2} d_i +\frac{1}{2r} c_N^2.
\end{align*}
Substituting $a_{N-1} = 1 - c_N(1 + \sum_{j=0}^{N-2}d_j)$, we have that this coefficient is
\begin{align*}
    -1 + \frac{1}{2r}c_N^2.
\end{align*}
This coefficient is zero if and only if the last equation in system C holds.

The coefficient on $\ip{g_{N-1}, g_N}$ is
\begin{align*}
    - (\alpha-1) a_{N-1}-\alpha c_N \left(1 + \sum_{i=0}^{N-2}d_i\right) + \frac{1}{2r} c_{N-1}c_N.
\end{align*}
Substituting $c_N = \sqrt{2r}$ and $a_{N-1} = 1 - c_N(1 + \sum_{j=0}^{N-2}d_j)$, we have that this coefficient is
\begin{align*}
    - (\alpha-1) - \sqrt{2r}\left(1 + \sum_{j=0}^{N-2}d_j\right) + \frac{1}{\sqrt{2r}} c_{N-1}.
\end{align*}
This is equal to zero if only if the second equation in system C holds.

The coefficients on $\frac{1}{2}\norm{g_{N-1}}^2$ and $\ip{g_{N-2}, g_{N-1}}$ are
\begin{gather*}
    - a_{N-2} - b_{N-2} 
    -a_{N-1} - c_{N-1}\left(1+\sum_{i=0}^{N-3}d_i\right)+ \frac{1}{2r}c_{N-1}^2\qquad \text{and}\\
     -(\alpha-1) a_{N-2} + b_{N-2}  -\alpha c_{N-1} \left(1+\sum_{i=0}^{N-3}d_i\right)+ \frac{1}{2r}c_{N-2}c_{N-1}
\end{gather*}
respectively.
As $\alpha\neq 0$, we can invert this as a linear system of two equations in the two variables $a_{N-2}$ and $b_{N-2}$. Thus, these two coefficients are equal to zero if and only if the second equation in system A and the second equation in system B hold.

Now, let $i\in[0,N-3]$. The coefficients on $\frac{1}{2}\norm{g_{i+1}}^2$ and $\ip{g_{i},g_{i+1}}$ are
\begin{gather*}
    - a_i- b_i - a_{i+1} - \sum_{j=i+3}^N d_{i+1}c_j - c_{i+1}\left(1+\sum_{j=0}^{i-1}d_j\right) + (2\alpha-1) b_{i+1}  +\frac{1}{2r}c_{i+1}^2\qquad\text{and}\\
    - (\alpha-1)a_i+b_i - \alpha c_{i+1}\left(1+\sum_{j=0}^{i-1}d_j\right)+\frac{1}{2r}c_ic_{i+1}
\end{gather*}
respectively. As $\alpha\neq 0$, we can invert this as a linear system of two equations in the two variables $a_{i}$ and $b_{i}$. Thus, these two coefficients are equal to zero if and only if the first equation in system A and the first equation in system B hold.

All that remains is the coefficient on $\frac{1}{2}\norm{g_0}^2$. This coefficient is given by
\begin{align*}
    - c_0 - a_0 - d_0 \sum_{j=2}^N c_j + (2\alpha-1)b_0 +\frac{1}{2r}c_0^2 - \epsilon_N.
\end{align*}
Setting this coefficient equal to zero gives the last equation of system D.
\end{proof} 	\Cref{thm:main} eliminates the variables $a,b,c$ in terms of $d$: Suppose a positive vector $d$ is given.
We may use $d$ to \emph{define} $a,b,c$ as follows:
Use system C to define $c$.
Next, use system A equation 3 to define $a_{N-1}$
and use system A equation 2 and system B equation 2 to define $a_{N-2}$ and $b_{N-2}$.
To complete the definition of $a,b,c$ in terms of $d$, we use system A equation 1 and system B equation 1 starting from $i= N-2$ down to $i=0$ to define the remaining $a_i$ and $b_i$.
Finally, 
define the errors $\epsilon_0,\dots,\epsilon_N$ by system D.
One may check that the entries of $c$ are affine in $d$ and the entries of $a,b$ are quadratic in $d$. Thus, the errors $\epsilon_0,\dots,\epsilon_N$ are quadratic expressions in $d$.

We refer to $d$ as a certificate if each of the errors $\epsilon_0=\dots=\epsilon_N=0$ and the vectors $a,b,c,d$ are positive.
Relaxing this to allow for numerical imprecision, we say $d$ is a $\delta$-certificate if $\sum_{i=0}^N \max\{\epsilon_i,0\} \leq \delta$ and $a,b,c$ are positive.

A $\delta$-certificate formally proves a relaxation of the rate claimed in Conjecture~\ref{conj:main}:
Since gradient descent is a descent method for $\alpha\in(0,2)$, we have the bounds $0\leq f_i - f_\star \leq f_0 - f_\star \leq \frac{\norm{x_0 - x_\star}^2}{2} \leq \frac{1}{2}$ and $0\leq \frac{\norm{g_0}^2}{2} \leq \frac{\norm{x_0 - x_\star}^2}{2} \leq \frac{1}{2}$. Hence, a $\delta$-certificate $d$ implies
\begin{align*}
	f_N - f_\star \leq r + \frac{1}{2}\sum_{i=0}^N \max\{\epsilon_i,0\} = r + \frac{\delta}{2}.
\end{align*}

\section{Numerical Construction of $\delta$-Certificates}\label{sec:numerics}
For each $N$, we can apply Newton's method to the system of quadratic equalities defined by system D after eliminating $a,b,c$ via systems A,B,C and setting $\epsilon_0,\dots,\epsilon_N = 0$. Despite this system being overdetermined ($N+1$ equations in $N-1$ variables), Newton's method was able to find solutions up to machine precision.

The general shape of the certificates change only mildly as $N$ changes; see Figure~\ref{fig:four_plots}.
Thus, we are able to generate a good initialization for Newton's method by linearly extrapolating certificates for smaller values of $N$ to larger values of $N$ (implementation available at \url{www.github.com/bgrimmer/GDConstantStepsizeCertificate}). We do this for every value $N=3,4,5,\dots,2240$; every 320th value $N=2240, 2560 ,\dots, 8960$; and every 1600th value $N=8960,10560,\dots ,\Nvalue$. Every computed $\delta$-certificate has individual errors $|\epsilon_i| \leq 10^{-15}$ and total positive error $\delta\leq 2\times 10^{-11}$, effectively only being limited by machine precision.
\begin{theorem}\label{thm:computed}
	For every $N$ listed above, up to $20160$, the conjectured optimal stepsize $\alpha(N)$ nearly attains the conjectured optimal rate $r(N)$:
	$$ \max_{(f,x_0)\in\mathcal{F}_{L,D}} f(x_N)-\inf f  \leq  \left(r(N)+10^{-11}\right)LD^2. $$
\end{theorem}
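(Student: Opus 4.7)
The plan is to reduce the theorem to the numerical construction and verification of a $\delta$-certificate, in the sense defined immediately after \Cref{thm:main}, for every value of $N$ in the prescribed list, with $\delta\le 2\times 10^{-11}$. Once such a $d$ is in hand, the derivation already carried out before \Cref{thm:computed} (applying the bound $f_N-f_\star \le r+\tfrac{\delta}{2}$ implied by the nonnegativity and boundedness of the $f_i-f_\star$ and $\tfrac12\|g_0\|^2$ terms) immediately yields the claimed inequality after undoing the rescaling $L=D=1$. So the bulk of the proof is the explicit exhibition and verification of a $\delta$-certificate $d=d(N)$ for each $N$ in the list.

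First, I would formalize the certificate-verification procedure as a finite check on a given positive vector $d\in\R^{N-1}$: (i) use system C to define the vector $c$ (affine in $d$), (ii) use system A equation 3 to define $a_{N-1}$, then system A equation 2 and system B equation 2 to define $a_{N-2},b_{N-2}$, then for $i=N-3,N-4,\dots,0$ use system A equation 1 and system B equation 1 to define $a_i,b_i$ (yielding quantities quadratic in $d$), (iii) check $a,b,c>0$, and (iv) use system D to compute $\epsilon_0,\dots,\epsilon_N$ and verify $\sum_{i=0}^{N}\max\{\epsilon_i,0\}\le 2\times 10^{-11}$. By \Cref{thm:main}, once these checks pass, the identity \eqref{eq:target_with_errors} holds pointwise on gradient descent trajectories, so for every $(f,x_0)\in\mathcal{F}_{1,1}$,
\begin{equation*}
0\le \sum_{ij}\lambda_{ij}Q_{ij} = f_\star - f_N + r\bigl(\|x_0-x_\star\|^2 - \|\cdot\|^2\bigr) + \sum_{i=0}^{N-1}\epsilon_i(f_i-f_\star) + \tfrac{\epsilon_N}{2}\|g_0\|^2,
\end{equation*}
and dropping the nonpositive slack term, together with $0\le f_i-f_\star\le \tfrac12$ and $0\le \tfrac12\|g_0\|^2\le \tfrac12$ (valid for $\alpha\in(0,2)$, which holds for the conjectured $\alpha(N)$), yields $f_N-f_\star \le r+\tfrac{\delta}{2}\le r(N)+10^{-11}$.

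Second, I would produce the vectors $d(N)$. Following the scheme in \Cref{sec:numerics}, I would solve the overdetermined system of $N+1$ quadratic equations $\epsilon_0(d)=\dots=\epsilon_N(d)=0$ in $N-1$ unknowns by damped Newton iterations, warm-started via a linear extrapolation of $d(N_1)$ and $d(N_2)$ (suitably resampled to length $N-1$) from two previously computed nearby values of $N$. Starting from a base case with small $N$ computed by cold-started Newton from a constant or hand-tuned initialization, I would iteratively march up the list $N=3,4,\dots,2240$, then the coarser sweeps $2240,2560,\dots,8960$ and $8960,10560,\dots,\Nvalue$, recording the produced $d(N)$ together with the resulting $(a,b,c)$ and $\epsilon$ vectors. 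The outputs are the explicit witnesses for the theorem; the GitHub repository linked in the paper plays the role of the appendix containing these witnesses.

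The main obstacle is not analytic but numerical: Newton's method for this quadratic system could in principle fail to converge, or converge to a solution with $a,b,c$ not positive, or leave a residual above the $10^{-11}$ threshold. I expect the slow, structured drift of the solution with $N$ (as illustrated in Figure~\ref{fig:four_plots}) to make the linear extrapolation land well within the basin of quadratic convergence, so that even for $N$ up to $\Nvalue$ each Newton solve terminates at a genuinely positive certificate with $\max_i|\epsilon_i|\le 10^{-15}$ and aggregate positive error $\le 2\times 10^{-11}$; this is exactly the empirical behaviour reported in \Cref{sec:numerics}, and the theorem is simply the combination of that certified numerical verification with the $r+\tfrac{\delta}{2}$ bound derived above.
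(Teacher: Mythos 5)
Your proposal is correct and follows essentially the same route as the paper: the paper's proof of \Cref{thm:computed} is precisely the exhibition of the numerically computed $\delta$-certificates (posted at the linked repository), combined with the $f_N-f_\star\le r+\tfrac{\delta}{2}$ bound derived just before the theorem via \Cref{thm:main}. Your description of the verification procedure, the Newton/extrapolation construction of $d(N)$, and the final inequality matches the paper's argument.
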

\begin{proof}
	See certificates at~\url{www.github.com/bgrimmer/GDConstantStepsizeCertificate}.
\end{proof}
\begin{figure}[t]
	\centering
	\begin{subfigure}[b]{0.24\textwidth}
		\includegraphics[width=\textwidth]{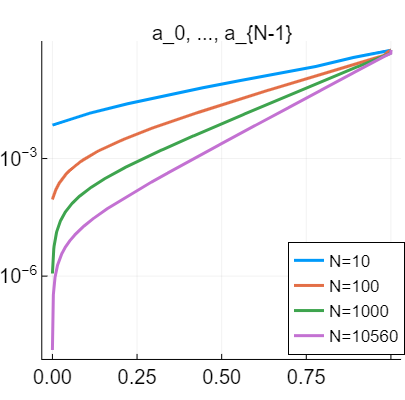}
		\caption{a's}
		\label{fig:a_val}
	\end{subfigure}
	\hfill
	\begin{subfigure}[b]{0.24\textwidth}
		\includegraphics[width=\textwidth]{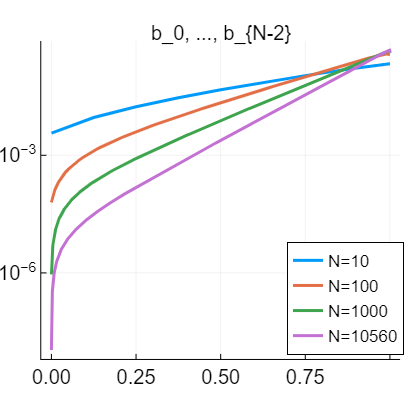}
		\caption{b's}
		\label{fig:b_val}
	\end{subfigure}
	\hfill
	\begin{subfigure}[b]{0.24\textwidth}
		\includegraphics[width=\textwidth]{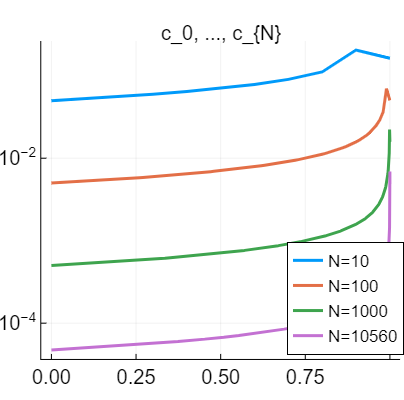}
		\caption{c's}
		\label{fig:c_val}
	\end{subfigure}
	\hfill
	\begin{subfigure}[b]{0.24\textwidth}
		\includegraphics[width=\textwidth]{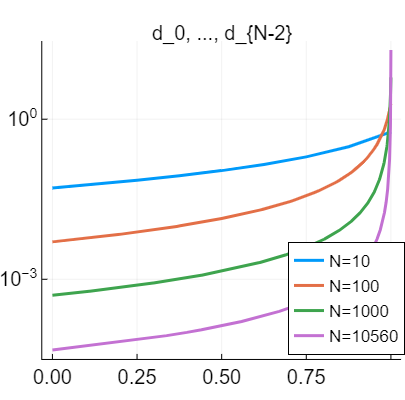}
		\caption{d's}
		\label{fig:d_val}
	\end{subfigure}
	\caption{Numerically computed certificates $a,b,c,d>0$ for various $N$, scaled to range $[0,1]$.}
	\label{fig:four_plots}
\end{figure}
 	
	\paragraph{Acknowledgements.} Benjamin Grimmer's work was supported in part by the Air Force Office of Scientific Research under award number FA9550-23-1-0531.
	
	\bibliographystyle{plainnat}

\end{document}